\newcommand{\fd}{\rightarrow}
\newcommand{\R}{\mathbb{R}}
\newcommand{\ito}{\infty}
\newcommand{\sm}{\setminus}
\newcommand{\la}{\lambda}
\newcommand{\Om}{\Omega}
\def\R{{\mathbb {R}}}
\newcommand{\ve}{\varepsilon}
\newtheorem{teo}{Theorem}[section]
\newtheorem{lema}[teo]{Lemma}
\theoremstyle{remark}
\newtheorem{remark}[teo]{Remark}
\theoremstyle{definition}
\newtheorem{defi}[teo]{Definition}
\numberwithin{equation}{section}
\begin{document}

\title[Three solutions for a nonlocal problem with critical growth]
{Three solutions for a nonlocal problem with critical
growth.}

\author[N. Cantizano and A. Silva]{Natal\'i Ail\'in Cantizano and Anal\'ia Silva}

\address{A. Silva
\hfill\break\indent Instituto de Matem\'atica Aplicada San Luis,
IMASL.
 \hfill\break\indent Universidad Nacional de San Luis and CONICET.
 \hfill\break\indent
Ejercito de los Andes 950.
 \hfill\break\indent D5700HHW San Luis, Argentina.}
 \email{{\tt asilva@dm.uba.ar }}
\urladdr[A.Silva]{https://analiasilva.weebly.com/}

\address{N. Cantizano
\hfill\break\indent Instituto de Matem\'atica Aplicada San Luis,
IMASL.
 \hfill\break\indent Universidad Nacional de San Luis and CONICET.
 \hfill\break\indent
Ejercito de los Andes 950.
 \hfill\break\indent D5700HHW San Luis, Argentina.}
 \email{{\tt ncantizano@unsl.edu.ar}}

\subjclass[2010]{35R01,35R11}

\keywords{Sobolev embedding, Non-local, Critical exponents,
 Concentration compactness}

\begin{abstract}
 The main  goal of this work  is to prove the existence of three different  solutions (one positive, one negative and one with nonconstant sign) for the equation $(-\Delta_p)^s u= |u|^{p^{*}_s -2} u +\lambda f(x,u)$  in a bounded domain with Dirichlet condition, where $(-\Delta_p)^s$ is the well known $p$-fractional Laplacian and
$p^*_s=\frac{np}{n-sp}$  is the critical Sobolev exponent for the
non local case. The proof follows the ideas of \cite{Silva} and is
based in the extension of the Concentration Compactness Principle
for the $p$-fractional Laplacian \cite{MS} and Ekeland's variational
Principle \cite{Ekeland}.
\end{abstract}

\maketitle

\section{Introduction}

Let us consider the following  non local equation with Dirchlet boundary conditions
\begin{equation}\label{P}
\begin{cases}
(-\Delta_{p})^s u= |u|^{p^{*}_s -2} u +\lambda f(x,u) & \text{in } \Omega,\\
u=0 & \text{in } \R^n \sm \Omega.
\end{cases}\,\,
\end{equation}
where $s\in(0,1)$, $\Omega$ is a smooth and bounded domain in
$\R^n$ and $(-\Delta_p)^su$, called the $p$-fractional Laplacian, is
defined up to a normalization constant by
\begin{equation*}
(-\Delta_p)^s u:=2 \, \lim_{\ve\fd 0^+} \int_{\R^{n}\sm B_{\ve}(x)}
\frac{|u(x)-u(y)|^{p-2}(u(x)-u(y))}{|x-y|^{n+ps}} \,dy  .
\end{equation*}
When $p=2$ this is the well known fractional Laplacian.
 Problems involving non local operators have many applications, just to cite a few, we refer to  \cite{DGLZ1, Eri, G-L} for some physical models, \cite{A-B, Leve, Sch} for some applications in finances, \cite{Co} for applications in fluid dynamics, \cite{Hu, Ma-Va, Re-Rho} for application in ecology and \cite{G-O} for some applications in image processing.

The functional framework for this operator are the fractional order
Sobolev spaces, see \cite{T} and \cite{H}. The fractional order
Sobolev space is defined by
$$
W^{s,p}(\R^n) := \left \{u\in L^p(\R^n)\colon [u]_{s,p}<\ito\right
\},
$$
where $[u]_{s,p}$ is the famous seminorm of Gagliardo is defined by
\begin{equation*}
    [u]_{s,p} := \left( \int_{\R^{2n}}\frac{(u(x)-u(y))^p}{|x-y|^{n+ps}}\, dx\,\,dy
    \right)^{\frac{1}{p}},
\end{equation*}
and $W^{s,p}_0(\Om)$ is defined by $ W^{s,p}_0(\Om):= \{u\in
L^{p}(\R^n) : [u]_{s,p}<\ito , u=0 \text{ in } \R^n\sm \Om \}. $
 It is well-known that when $sp<n$  the following Sobolev
inequality holds
$$
\left(\int_{\R^n} |u|^{\frac{np}{n-sp}}\, dx\right)^\frac{n-sp}{n}
\le C \int_{\R^{2n}} \frac{|u(x)-u(y)|^p}{|x-y|^{n+sp}}\,
dxdy
$$
for  $u\in C^\infty_c(\R^n)$, where $p^{*}_s=\frac{np}{n-sp}$ is
called the critical Sobolev exponent. So, the embedding $W^{s,p}(\Omega)\hookrightarrow L^{q}(\Omega)$ for $1\leq q \leq p^{*}_s $ is continuous. Moreover, is compact for $ 1\leq q < p^{*}_s  $.
Critical equations with the fractional Laplacian in bounded domains
have been considered in \cite{BCPS,S2,S1,SV1,SV2}. Multiplicity of
solutions for nonlocal equation with critical growth was studied in
\cite{FMS,PSY}. The main goal of this paper is to show the existence
of three different solutions of the problem \eqref{P}. Moreover
these solutions are one positive, one negative and one with non
constant sign. We impose adequate conditions on the source $f$ and
on the parameter $\lambda$ but we do not impose any parity
conditions on the source $f$. This result extends an old paper of
Struwe \cite{st}. Similar results for some local operators can be
found in \cite{DPFBS,Silva,FB,Li}.
 The method  in the proof used in \cite{st} consists on restricting the functional associated to \eqref{P} to three different manifolds constructed by imposing a sign restriction and normalizing condition. Then using Ekeland variational principle (see \cite{Ekeland}) and a generalization to the fractional setting obtained by Mosconi et al. for any $1<p<\tfrac{n}{s}$ (see
 \cite{MS})
of the well known Concentration Compactness Principle of P.L.Lions
(see \cite{Lions}), we can prove the existence of a critical point
of each restricted functional, that are critical points of the
unrestricted one.

Throughout this work, by weak solution of
\eqref{P} we understand critical points of the associated energy functional acting on the Sobolev space $ W^{s,p}_0 (\Omega ) $:
\begin{equation}\label{energia}
\Phi(u)= \frac{1}{p}\int_{\R^{2n}}
\frac{\left(u(x)-u(y)\right)^p}{|x-y|^{n+ps}} \, dy \,dx -
\int_{\Omega} \frac{1}{p^*_s} |u(x)|^{p^{*}_s} +\lambda F(x,u(x))
\,dx,
\end{equation}
where $F(x,u)=\int_0^u f(x,z) dz$.

\section{Assumptions and statement of the results}

The precise assumptions on the source terms $f$ are as follows:
\begin{itemize}
    \item [(H1)] $f :\Om \times \R \fd \R$, is a measurable function with respect to the first argument and continuously
differentiable with respect to the second argument for almost every
$x  \in  \Om$. Moreover, $f (x, 0) = 0$ for every $x \in  \Om$.
    \item  [(H2)] There exist constants $c_1\in (0,\frac{1}{p^*_s -1})$,  $c_2\in (p,p^*_s)$, $0<c_3<c_4$ such that for any $u\in L^q (\Omega )$ and $p<q<p^*_s$,
    \begin{equation*}
        c_3\|u\|^q_{L^q(\Omega)} \leq c_2 \int_{\Omega} F(x,u)\,dx \leq \int_{\Omega} f(x,u)u\,dx \leq c_1 \int_{\Omega} f_u(x,u)u^2\,dx \leq  c_4\|u\|^q_{L^q(\Omega)}.
    \end{equation*}
\end{itemize}

\begin{remark} The following example fulfill all of our
hypotheses, $f(x,u)=|u|^{q-2} u + |u_{+}|^{r-2} u_{+}$ if $r\leq q $.
\end{remark}
So the main result of the paper reads:
\begin{teo}\label{main}
Under the assumptions $(H1)-(H2)$, there exist $\la^*>0$ depending
only on $n,\, p,\, q$ and the constant $c_3$ in $(H2)$, such that
for every $\la>\la^*$, there exist three different, nontrivial,
(weak) solutions of problem \eqref{P}. Moreover these solutions are,
one positive, one negative and the other one has non-constant sing.
\end{teo}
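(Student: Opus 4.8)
The plan is to adapt Struwe's three-manifold technique to the nonlocal critical setting, following the strategy announced in the introduction. First I would introduce the three natural constraint sets inside $W^{s,p}_0(\Omega)$: for $u$ with $u_\pm \neq 0$, define the "fibering" type manifolds
\[
\mathcal{M}_+ = \{ u : u_+ \neq 0,\ \langle \Phi'(u), u_+\rangle = 0 \},\quad
\mathcal{M}_- = \{ u : u_- \neq 0,\ \langle \Phi'(u), u_-\rangle = 0 \},
\]
and $\mathcal{M}_0 = \mathcal{M}_+ \cap \mathcal{M}_-$. Using $(H1)$--$(H2)$, in particular the convexity-type inequality $\int f(x,u)u \le c_1 \int f_u(x,u)u^2$ with $c_1 < 1/(p^*_s-1)$, I would show that each of these sets is a $C^1$ submanifold of finite codimension, that the fibering maps $t \mapsto \Phi(t u_+ + s u_-)$ have a unique nontrivial critical point which is a strict maximum in the relevant directions, and hence that a minimizer of $\Phi$ restricted to each manifold is automatically an unrestricted critical point (the Lagrange multipliers vanish because the derivative of the constraint in the fibering direction is nonzero). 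This is where $(H2)$ is used crucially and where I expect the bookkeeping to be heaviest but not conceptually hard.

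Next I would establish the variational estimates that make the minimization work. Set $c_+ = \inf_{\mathcal{M}_+}\Phi$, $c_- = \inf_{\mathcal{M}_-}\Phi$, $c_0 = \inf_{\mathcal{M}_0}\Phi$. Using the subcritical term $F$ with the lower bound $c_3\|u\|_{L^q}^q \le c_2\int F$ and a test function argument, I would show that for $\lambda$ large these levels are pushed strictly below the critical threshold
\[
c^* = \frac{s}{n}\, S^{n/(sp)},
\]
where $S$ is the best fractional Sobolev constant; more precisely $c_0 < c_+ + c^*$ and $c_0 < c_- + c^*$, and $c_\pm < c^*$. The exponent $q \in (p, p^*_s)$ and the freedom to take $\lambda > \lambda^*(n,p,q,c_3)$ are exactly what provide this gap; this is the standard "the perturbation lowers the mountain pass level below the first noncompactness level" computation, now in the Gagliardo-seminorm norm.

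Then comes the compactness analysis, which is the main obstacle. I would take minimizing sequences on each manifold, apply Ekeland's variational principle (\cite{Ekeland}) to obtain Palais--Smale sequences at the levels $c_+, c_-, c_0$ that also asymptotically respect the sign constraints, and then prove these PS sequences are relatively compact below the threshold $c^*$. This is precisely where the concentration compactness principle for the $p$-fractional Laplacian of Mosconi--Squassina (\cite{MS}) enters: a bounded PS sequence gives measures $\mu, \nu$ with the reverse Sobolev inequality $\mu(\{x_j\}) \ge S\, \nu(\{x_j\})^{p/p^*_s}$ at atoms, and one rules out atoms by showing that an atom would contribute at least $c^*$ to the energy, contradicting the level estimates from the previous paragraph. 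The delicate points special to the nonlocal case are the nonlocality of the energy (the seminorm does not split cleanly over disjoint supports, so one needs the Brezis--Lieb type lemma for the Gagliardo seminorm and careful handling of the cross terms $u_+, u_-$ interacting through the kernel $|x-y|^{-n-ps}$) and verifying that the weak limit inherits the correct sign, so that it genuinely lies in the right manifold and is nontrivial.

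Finally I would assemble the three solutions: the minimizer on $\mathcal{M}_+$ is a nonnegative (hence, by a strong maximum principle / the structure of the equation, positive) solution, the minimizer on $\mathcal{M}_-$ is negative, and the minimizer on $\mathcal{M}_0$ changes sign; the strict inequalities $c_0 < c_\pm + c^*$ guarantee the sign-changing minimizer is distinct from the other two (its energy cannot coincide with either one-signed solution plus a bubble, and a direct comparison shows $c_0 > \max\{c_+, c_-\}$, so all three are different and nontrivial). I would close by noting $\lambda^*$ depends only on $n, p, q$ and $c_3$ since those are the only quantities entering the sublevel estimate.
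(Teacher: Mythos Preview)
Your overall strategy---Struwe's three constraint manifolds, Ekeland to produce Palais--Smale sequences, and the concentration--compactness principle of \cite{MS} to recover compactness below the threshold $\tfrac{s}{n}S^{n/(sp)}$---is exactly what the paper does. A few points of comparison are worth noting.

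\textbf{Constraint manifolds.} You define the manifolds by $\langle \Phi'(u), u_\pm\rangle = 0$; the paper instead imposes $[u_\pm]_{s,p}^p - \int_\Omega |u_\pm|^{p^*_s} = \lambda\int_\Omega f(x,u)u_\pm$. In the local case these coincide, but in the nonlocal case they do not: the bilinear form applied to $(u,u_+)$ is not $[u_+]_{s,p}^p$ because of the cross-interaction of $u_+$ and $u_-$ through the kernel. The paper's choice makes the decomposition $T_u W^{s,p}_0 = T_u M_i \oplus \mathrm{span}\{u_+,u_-\}$ and the computation $\langle \nabla\varphi_1(u), u_-\rangle = 0$ cleaner; your choice makes ``Lagrange multipliers vanish'' more transparent. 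Both work, but the cross-term bookkeeping you flag as ``heaviest'' lands in different places.

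\textbf{Energy levels.} You propose the refined comparisons $c_0 < c_\pm + c^*$ and $c_0 > \max\{c_+,c_-\}$. The paper does something much simpler: it shows directly that \emph{all three} infima satisfy $c_i < \tfrac{s}{n}S^{n/(sp)}$ for $\lambda$ large, by taking any test function $w_0 > 0$ (resp.\ $w_0,w_1$ with disjoint supports) and observing that the scaling $t_\lambda$ placing $t_\lambda w_0$ on the manifold satisfies $t_\lambda \to 0$ as $\lambda\to\infty$, so $\Phi(t_\lambda w_0)\to 0$. Distinctness of the three solutions then comes for free from their signs, not from energy comparison. Your route would also work but is more than is needed here.

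\textbf{Sign of the one-signed solutions.} You assert that the minimizer on $\mathcal{M}_+$ is nonnegative, but on your manifold (no sign restriction, only $u_+\neq 0$) this is not automatic and would require an extra argument. The paper sidesteps this by minimizing $\Phi$ not on $M_1$ but on the complete subset $K_1 = \{u\in M_1 : u\geq 0\}$ (and analogously $K_2$), so the sign is built in from the start.

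In short: same architecture, but the paper's version is a bit more economical on the energy estimates and handles the sign by restriction rather than a posteriori.
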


\section{Proof of Theorem \ref{main} }

We will construct three disjoint sets $K_i$ not containing $0$ such
that $\Phi $ has a critical point in $K_i$. These sets will be
subsets of $C^{1}-$manifolds $M_i \subset W^{s,p}_0(\Omega)$ that will
be constructed by imposing a sing restriction and a normalizing
condition.

In fact,
\begin{defi} For each $i=1,2,3$, let $M_i\subset W^{s,p}_0 (\Omega) $ be defined as
\begin{equation*}
M_1=\left\{u\in W^{s,p}_0(\Omega) : \int_{\Omega} u_+>0 \,\,and \,\,
[u_+]_{s,p}^p- \int_{\Omega}  |u_+|^{p^{*}_s} \,dx=\int_{\Omega}\lambda
f(x,u) u_+\,dx \right\},
\end{equation*}
\begin{equation*}
M_2=\left\{u\in W^{s,p}_0(\Omega) : \int_{\Omega} u_->0 \,\,and \,\,
[u_-]_{s,p}^p- \int_{\Omega}  |u_-|^{p^{*}_s}\,dx =\int_{\Omega}\lambda
f(x,u)u_-\, dx \right\},
\end{equation*}
\begin{equation*}
M_3=M_1 \cap M_2,
\end{equation*}
where $u_+= \max\{u,0\}$ and $u_-=\max \{-u,0\}$.
\end{defi}

\begin{defi}For each $i=1,2,3$, let $K_i\subset W^{s,p}_0 (\Omega) $ be defined as
\begin{equation*}
K_1=\{ u\in M_1: u\geq 0 \}, \,\,\, K_2=\{ u\in M_2: u\leq 0 \},
\,\,\, K_3=M_3.
\end{equation*}
\end{defi}

First, we need the following lemma to show that these sets are
nonempty and, moreover, give some properties that will be useful in
the proof of our main result.

\begin{lema}\label{t}
For every $w_0\in W^{s,p}_0(\Omega)$, $w_0>0$ ($w_0<0$), there exists
$t_\la$>0 such that $t_\la w_0\in M_1$ ($\in M_2$). Moreover,
$\lim_{\la \fd \ito} t_{\la}=0$.

As a consequence, given $w_0,w_1 \in W^{s,p}_0(\Omega)$, $w_0>0$,
$w_1<0$ with disjoint supports, there exist
$\overline{t}_{\la},\underline{t}_{\la}$ such that
$\overline{t}_{\la}w_0+\underline{t}_{\la}w_1 \in M_3$. Moreover
$\overline{t}_{\la},\underline{t}_{\la} \fd 0$ as $\la \fd \ito$.
\end{lema}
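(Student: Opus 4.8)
The plan is to reduce everything to a one-variable (resp. two-variable) scaling argument. Fix $w_0 \in W^{s,p}_0(\Omega)$ with $w_0 > 0$, and for $t > 0$ consider the function
\[
g(t) := [t w_0]_{s,p}^p - \int_\Omega |t w_0|^{p^*_s}\,dx - \lambda \int_\Omega f(x, t w_0)\, t w_0 \, dx = t^p [w_0]_{s,p}^p - t^{p^*_s}\!\int_\Omega |w_0|^{p^*_s}\,dx - \lambda \int_\Omega f(x, t w_0)\, t w_0\, dx,
\]
noting that since $w_0 > 0$ we have $(t w_0)_+ = t w_0$, so $t w_0 \in M_1$ if and only if $g(t) = 0$ and $\int_\Omega (t w_0)_+ > 0$ (the latter being automatic for $t > 0$). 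I would first show $g(t) > 0$ for $t$ small: the term $t^p[w_0]_{s,p}^p$ dominates because $p^*_s > p$ and, by (H2) applied with $q > p$, the source term $\lambda \int_\Omega f(x, tw_0)\, tw_0\, dx$ is controlled by $c_4 \|t w_0\|_{L^q}^q = c_4 t^q \|w_0\|_{L^q}^q$, which is $o(t^p)$ as $t \to 0^+$. Then I would show $g(t) < 0$ for $t$ large: here I use the lower bound in (H2), namely $\int_\Omega f(x, tw_0)\, tw_0\, dx \geq c_3 t^q \|w_0\|_{L^q}^q$ (or even just the $p^*_s$-term, which already dominates $t^p$), so $g(t) \to -\infty$. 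Continuity of $g$ (from (H1) and dominated convergence) then yields a zero $t_\lambda > 0$, giving $t_\lambda w_0 \in M_1$; the case $w_0 < 0$ and $M_2$ is identical after replacing $w_0$ by $-w_0$ and using $(tw_0)_- = t|w_0|$.

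For the limit $t_\lambda \to 0$ as $\lambda \to \infty$, the cleanest route is to divide the identity $g(t_\lambda) = 0$ by $t_\lambda^p$ to get
\[
[w_0]_{s,p}^p = t_\lambda^{p^*_s - p}\!\int_\Omega |w_0|^{p^*_s}\,dx + \lambda\, t_\lambda^{-p}\!\int_\Omega f(x, t_\lambda w_0)\, t_\lambda w_0\, dx \ \geq\ \lambda\, c_3\, t_\lambda^{q-p}\|w_0\|_{L^q}^q,
\]
using the lower bound in (H2). Since $q > p$, this forces $t_\lambda^{q-p} \leq [w_0]_{s,p}^p / (\lambda c_3 \|w_0\|_{L^q}^q) \to 0$, hence $t_\lambda \to 0$. (One should first note $t_\lambda$ cannot blow up — again because the $p^*_s$ term would dominate — which is needed to drop that term from the lower bound, or one simply keeps it: it is nonnegative, so the inequality above is valid regardless.)

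For the $M_3$ statement, take $w_0 > 0$, $w_1 < 0$ with disjoint supports. Then for $s, t > 0$ the function $u = s w_0 + t w_1$ satisfies $u_+ = s w_0$ and $u_- = t|w_1|$ (by disjointness of supports), and crucially the Gagliardo seminorm decouples on the relevant pieces in the sense that $[u_+]_{s,p}^p = s^p[w_0]_{s,p}^p$ and $[u_-]_{s,p}^p = t^p[|w_1|]_{s,p}^p$; moreover $f(x,u)u_+ = f(x, s w_0)\, s w_0$ depends only on $s$, and $f(x,u)u_- = f(x,-t|w_1|)(t|w_1|)$ depends only on $t$. Therefore the two defining equations of $M_1$ and $M_2$ separate into an equation in $s$ alone and an equation in $t$ alone, each of which is exactly the single-variable problem already solved; choosing $\overline t_\lambda$ from the $w_0$-problem and $\underline t_\lambda$ from the $w_1$-problem gives $\overline t_\lambda w_0 + \underline t_\lambda w_1 \in M_1 \cap M_2 = M_3$, and both tendto $0$ as $\lambda \to \infty$ by the first part.

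The main obstacle I anticipate is the decoupling of the Gagliardo seminorm under disjoint supports: unlike the local Dirichlet energy, $[u]_{s,p}^p$ does \emph{not} split as $[u_+]_{s,p}^p + [u_-]_{s,p}^p$ because of the cross terms in the double integral over $\R^{2n}$. However — and this is the point — the definition of $M_3$ only involves $[u_+]_{s,p}^p$ and $[u_-]_{s,p}^p$ separately, never $[u]_{s,p}^p$, so what I actually need is only that $[(s w_0 + t w_1)_+]_{s,p}^p = [s w_0]_{s,p}^p = s^p [w_0]_{s,p}^p$, which is immediate since $(s w_0 + t w_1)_+ = s w_0$ pointwise. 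So the apparent obstacle dissolves once one reads the definitions carefully; the genuinely delicate points are just the elementary analysis of $g$ (sign for small and large $t$, continuity) and making sure the lower bound in (H2) is invoked with a fixed profile so that the $q > p$ gap does the work in the limit.
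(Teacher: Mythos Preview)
Your argument is correct and follows the same route as the paper: bound $g(t)=\varphi_1(tw_0)$ above and below using (H2), invoke the intermediate value theorem from the sign change forced by $p<q<p^*_s$, and then extract $t_\lambda\to 0$ from the inequality $[w_0]_{s,p}^p\ge \lambda c_3 t_\lambda^{q-p}\|w_0\|_{L^q}^q$. The paper phrases the last step slightly differently---it locates the zero inside $[0,t_1]$ with $t_1=(A/(\lambda c_3 E))^{1/(q-p)}$ rather than bounding an already-found zero a posteriori---but this yields exactly the same estimate, and your discussion of the $M_3$ case (in particular that only $[u_+]_{s,p}$ and $[u_-]_{s,p}$ enter, so the nonlocal cross terms are irrelevant) makes explicit what the paper leaves as ``analogous.''
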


\begin{proof}
We prove Lemma \ref{t} for $M_1$, the other cases are analogous.

For $w\in W^{s,p}_0(\Omega)$, $w\geq 0$, we consider the functional
\begin{equation*}
\varphi_1(w)= [w]_{s,p}^p- \int_{\Omega} |w|^{p^{*}_s} +\lambda f(x,w)w
\,dx.
\end{equation*}

Given $w_0$, in order to prove the lemma, we must show that
$\varphi_1(t_\la w_0)=0$ for some $t_\la$. Using the hypothesis
(H2), we have that:
\begin{equation*}
    \varphi_1 (t w_0) \geq A t^p- Bt^{p^{*}_s} - \la c_4 Et^q
\end{equation*}
and
\begin{equation*}
    \varphi_1 (t w_0) \leq A t^p- Bt^{p^{*}_s} - \la c_3 Et^q,
\end{equation*}
where  the coefficients A,B and E are given by:
\begin{equation*}
A= [w_0]_{s,p}^p\, ,\,\,B= \int_{\Omega} |w_0|^{p^{*}_s}\,dx ,\,\,
E=\int_{\Omega} |w_0|^{q}\,dx.
\end{equation*}

Since $p<q<p^*_s$ it follows that $\varphi_1 (t w_0)$ is positive
for a $t$ small enough, and negative for $t$ big enough. Hence, by
Bolzano\rq s Theorem, there exists some $t=t_\la$ such that
$\varphi_1 (t_\la w_0)=0$.

In order to give an upper bound for $t_\la$, it is enough to find
some $t_1$, such that $\varphi_1 (t_1 w_0)<0$. We observe that:
\begin{equation*}
 \varphi_1 (t w_0) < A t^p - \la c_3 Et^q,
\end{equation*}
so it is enough to choose $t_1$ such that $At^p_1 - \la c_3 E
t^q_1=0$, i.e.,
\begin{equation*}
 t_1=\left( \frac{A}{c_3\la E} \right)^{\frac{1}{(q-p)}},
\end{equation*}
therefore, again by Bolzano\rq s Theorem, we can choose $t_\la \in
[0,t_1]$, which implies that $t_\la \fd 0$ when $\la \fd +\ito $, as
we wanted to prove.
\end{proof}

For the proof of Theorem \ref{main}, we need also the following
lemmas.

\begin{lema}\label{cotasunif}
 There exist constants $\alpha_j>0$ such that, for every $u\in K_i$, $i=1,2,3,$
\begin{equation*}
 \alpha_1 [u]_{s,p}^p \leq \alpha_2\left(  \int_{\Omega} |u|^{p^{*}_s} +\lambda f(x,u)u \,dx \right) \leq \alpha_3 \Phi (u) \leq \alpha_4
 [u]_{s,p}^p.
\end{equation*}
\end{lema}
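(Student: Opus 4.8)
The plan is to prove the chain of inequalities in Lemma \ref{cotasunif} by exploiting the defining constraint of the manifolds $M_i$ together with hypothesis (H2). First I would observe that for $u \in K_i$ the energy functional simplifies: since on $K_1$ one has $u = u_+ \geq 0$ and $u \in M_1$ means $[u]_{s,p}^p - \int_\Omega |u|^{p^*_s}\,dx = \lambda \int_\Omega f(x,u)u\,dx$, we can eliminate the seminorm term. Substituting this into \eqref{energia} gives
\begin{equation*}
\Phi(u) = \left(\frac{1}{p} - \frac{1}{p^*_s}\right)\int_\Omega |u|^{p^*_s}\,dx + \lambda\int_\Omega\left(\frac{1}{p}f(x,u)u - F(x,u)\right)dx.
\end{equation*}
By (H2), $\int_\Omega F(x,u)\,dx \leq \frac{1}{c_2}\int_\Omega f(x,u)u\,dx$ with $c_2 < p^*_s$, and also $c_2\int_\Omega F(x,u)\,dx \leq \int_\Omega f(x,u)u\,dx$, so the quantity $\frac{1}{p}f(x,u)u - F(x,u)$ is comparable (both above and below, up to positive constants depending only on $p$ and $c_2$) to $\int_\Omega f(x,u)u\,dx$, which is nonnegative. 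This yields the middle inequality $\alpha_2\big(\int_\Omega |u|^{p^*_s} + \lambda f(x,u)u\,dx\big) \leq \alpha_3\Phi(u)$ after also bounding $\int_\Omega |u|^{p^*_s}\,dx$ by itself with the factor $\frac1p - \frac1{p^*_s} > 0$.

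For the first inequality $\alpha_1[u]_{s,p}^p \leq \alpha_2\big(\int_\Omega |u|^{p^*_s} + \lambda f(x,u)u\,dx\big)$, I would again use the manifold constraint: $[u]_{s,p}^p = \int_\Omega |u|^{p^*_s}\,dx + \lambda\int_\Omega f(x,u)u\,dx$ directly, so in fact this holds with $\alpha_1 = \alpha_2$ and is an identity rather than an inequality (for $K_1$; for $K_2$ the same with $u_-$, and for $K_3 = M_3$ one adds the two constraints). The last inequality $\alpha_3\Phi(u) \leq \alpha_4[u]_{s,p}^p$ follows by discarding the negative contributions in \eqref{energia}: since $F(x,u) \geq \frac{c_3}{c_2}\|u\|_{L^q}^q \geq 0$ by (H2), we get $\Phi(u) \leq \frac{1}{p}[u]_{s,p}^p$, giving $\alpha_4 = \alpha_3/p$.

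The one genuine subtlety is the case $i=3$, where $u \in M_3 = M_1 \cap M_2$ is not sign-definite, so $u \neq u_+$ and $u \neq u_-$; here I would split $\int_\Omega |u|^{p^*_s}\,dx = \int_\Omega |u_+|^{p^*_s}\,dx + \int_\Omega |u_-|^{p^*_s}\,dx$ and similarly note $[u]_{s,p}^p \geq [u_+]_{s,p}^p + [u_-]_{s,p}^p$ (a standard inequality for the Gagliardo seminorm, since the cross terms $|u_+(x) - u_+(y)|^p + |u_-(x) - u_-(y)|^p \leq |u(x) - u(y)|^p$ pointwise), then add the two manifold identities to recover the same structure. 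One must also check that $f(x,u)u_+ = f(x,u_+)u_+ \geq 0$ using (H1)--(H2) restricted appropriately — this is where the precise form of (H2), stated for $u \in L^q(\Omega)$, is used, and care is needed because (H2) is phrased for general $u$ rather than for $u_\pm$; I expect this bookkeeping with the positive/negative parts, rather than any deep analysis, to be the main obstacle. All constants $\alpha_j$ depend only on $p$, $p^*_s$, $c_2$ and are in particular independent of $\lambda$ and of $u$, which is what the subsequent compactness arguments will require.
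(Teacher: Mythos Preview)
Your proposal is correct and follows essentially the same route as the paper: use the manifold constraint to link $[u]_{s,p}^p$ with $\int_\Omega |u|^{p^*_s}+\lambda f(x,u)u\,dx$, and use (H2) to compare $F$ with $f(x,u)u$ in both directions. The middle inequality in particular is the same computation, just organized slightly differently.

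There is one genuine difference worth noting. For the upper bound $\alpha_3\Phi(u)\le \alpha_4[u]_{s,p}^p$ you simply observe that $\int_\Omega F(x,u)\,dx\ge 0$ (from the lower bound in (H2)) and $\int_\Omega|u|^{p^*_s}\,dx\ge 0$, then drop both subtracted terms to get $\Phi(u)\le\frac1p[u]_{s,p}^p$. The paper instead bounds $\int F\le\frac{1}{c_2}\int f(x,u)u$, substitutes the constraint $\lambda\int f(x,u)u=[u]_{s,p}^p-\int|u|^{p^*_s}$, and arrives at $\Phi(u)\le\bigl(\tfrac1p+\tfrac1{c_2}\bigr)[u]_{s,p}^p$. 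Your argument is shorter and yields a better constant; the paper's detour is unnecessary here.

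You are also right to single out $K_3$ as the delicate case: the paper's proof simply asserts $[u]_{s,p}^p=\int_\Omega|u|^{p^*_s}+\lambda\int_\Omega f(x,u)u\,dx$ for all $K_i$ without further comment, whereas for $u\in K_3=M_3$ what one actually has is the pair of identities for $u_\pm$, and the Gagliardo seminorm does not split additively. Your plan to add the two constraints and use the two-sided comparison $[u_+]_{s,p}^p+[u_-]_{s,p}^p\le[u]_{s,p}^p\le 2^{p-1}\bigl([u_+]_{s,p}^p+[u_-]_{s,p}^p\bigr)$ is the right fix; just be aware that summing gives $\lambda\int f(x,u)(u_++u_-)$, so the bookkeeping with $f(x,u)u_\pm$ versus $f(x,u_\pm)u_\pm$ (which you flag) is indeed needed to connect this back to the quantity in (H2).
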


\begin{proof}
 As $u\in K_i$, we have that
 \begin{equation*}
      [u]_{s,p}^p =\int_{\Omega} |u|^{p^{*}_s} +\lambda f(x,u)u \,dx,
 \end{equation*}
choosing $\alpha_1=\alpha_2$ we have the first inequality.

For the last inequality by (H2)
\begin{equation*}
    \int_{\Omega} F(x,u)\,dx \leq \frac{1}{c_2} \int_{\Omega}  f(x,u)u\,dx.
\end{equation*}

 Furthermore,
 \begin{equation*}
   \left| \la \int_{\Omega} F(x,u)\,dx \right| = \la \int_{\Omega} F(x,u)\,dx \leq  \frac{1}{c_2}\int_{\Omega} \la f(x,u)u \,dx= \frac{1}{c_2}\left([u]_{s,p}^p -\int_{\Omega} |u|^{p^{*}_s}\,dx \right),
 \end{equation*}
 so
  \begin{equation}\label{cristiano}
   -\la \int_{\Omega} F(x,u)\,dx \leq  \frac{1}{c_2}\left( [u]_{s,p}^p -\int_{\Omega} |u|^{p^{*}_s}\,dx
   \right).
 \end{equation}

 By \ref{cristiano}, we have:
\begin{align*}
     \Phi(u)=& \frac{1}{p} [u]_{s,p}^p - \int_{\Omega} \frac{1}{p^*_s} |u(x)|^{p^{*}_s} +\lambda F(x,u) \,dx\\
     \leq& \frac{1}{p} [u]_{s,p}^p- \int_{\Omega} \frac{1}{p^*_s} |u(x)|^{p^{*}_s} \,dx +\frac{1}{c_2}\left( [u]_{s,p}^p -\int_{\Omega} |u(x)|^{p^{*}_s}\,dx \right)\\
     \leq&  \frac{1}{p} [u]_{s,p}^p +\frac{1}{c_2} [u]_{s,p}^p \\
     \leq& \left( \frac{1}{p} +\frac{1}{c_2} \right)[u]_{s,p}^p.
\end{align*}
This proves the third inequality, with $\alpha_4=\left(\frac{1}{p}
+\frac{1}{c_2} \right)\alpha_3$.

To prove the middle inequality we proceed as follows:
 \begin{align*}
     \Phi(u)=& \frac{1}{p} [u]_{s,p}^p- \int_{\Omega} \frac{1}{p^{*}_s} |u(x)|^{p^{*}_s} +\lambda F(x,u) \,dx\\
     \geq& \frac{1}{p} [u]_{s,p}^p-\int_{\Omega}   \frac{1}{p^{*}_s} |u(x)|^{p^{*}_s}\,dx - \frac{1}{c_2} \int_{\Omega} \la f(x,u)u \,dx.
 \end{align*}
 So
 \begin{align*}
     c_2 \Phi(u) \geq& c_2 \frac{1}{p}[u]_{s,p}^p -c_2 \int_{\Omega}   \frac{1}{p^*_s} |u(x)|^{p^{*}_s}\,dx -  \int_{\Omega}\la  f(x,u)u \,dx\\
     =& c_2 \frac{1}{p}\left(\int_{\Omega}   |u(x)|^{p^{*}_s}\,dx +  \int_{\Omega}  \la f(x,u)u\, dx \right) - c_2 \int_{\Omega}   \frac{1}{p^{*}_s} |u(x)|^{p^{*}_s}\,dx -  \int_{\Omega}\la  f(x,u)u\, dx\\
     =& c_2 \frac{1}{p}\int_{\Omega}   |u(x)|^{p^{*}_s}dx + c_2 \frac{1}{p} \int_{\Omega}  \la f(x,u)u\, dx  - c_2 \int_{\Omega}   \frac{1}{p^{*}_s} |u(x)|^{p^{*}_s}\,dx -  \int_{\Omega}\la  f(x,u)u\, dx\\
     =& c_2 \left(  \frac{1}{p}- \frac{1}{p^{*}_s} \right) \int_{\Omega}   |u(x)|^{p^{*}_s}\, dx + \left( c_2 \frac{1}{p} - 1 \right) \int_{\Omega}  \la f(x,u)u\,dx.
 \end{align*}
 Since $\gamma_1= c_2 \left(  \frac{1}{p}- \frac{1}{p^{*}_s}\right)$ and $\gamma_2= \left( c_2 \frac{1}{p}-1\right)$ are positive, we take $\alpha_2= \min \{\gamma_1, \gamma_2 \}$, $\alpha_3=c_2$ and we have
 \begin{equation*}
     \alpha_3 \Phi (u) \geq \alpha_2 \left(  \int_{\Omega} |u|^{p^{*}_s} +\lambda f(x,u)u\,dx \right).
 \end{equation*}
This finishes the proof.
\end{proof}

\begin{lema}\label{cota} There exists a constant
$ D $ such that $ [u_+]^p_s\geq D$, for all  $ u \in K_1$,
$[u_-]^p_{s,p}\geq D$ for all $  u \in K_2$, and $[u_-]^p_{s,p}\, ,
\,[u_+]^p_s\geq D$ for all $ u \in K_3$.
\end{lema}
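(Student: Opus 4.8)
The plan is to show that on each $K_i$ the relevant Gagliardo seminorm of the positive (or negative) part cannot be arbitrarily small. I will argue by contradiction. Suppose for instance $u \in K_1$ with $[u_+]_{s,p}^p$ small. Since $u \in K_1 \subset M_1$ we have the defining identity
\begin{equation*}
[u_+]_{s,p}^p = \int_{\Omega} |u_+|^{p^{*}_s}\,dx + \lambda \int_{\Omega} f(x,u) u_+\,dx.
\end{equation*}
On $K_1$ we have $u \geq 0$, so $u_+ = u$ and $f(x,u)u_+ = f(x,u)u$, which by (H2) is bounded below by $\tfrac{c_2}{1}\int_\Omega F(x,u)\,dx \geq c_3 \|u\|_{L^q(\Omega)}^q \geq 0$ and above by $c_4\|u\|_{L^q(\Omega)}^q = c_4\|u_+\|_{L^q(\Omega)}^q$. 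The first step is therefore to bound each term on the right-hand side by a power of $[u_+]_{s,p}$ strictly larger than $p$: by the Sobolev inequality $\int_\Omega |u_+|^{p^{*}_s}\,dx \le S^{-p^{*}_s/p}[u_+]_{s,p}^{p^{*}_s}$, and since $q < p^{*}_s$ and $\Omega$ is bounded, $\|u_+\|_{L^q(\Omega)}^q \le C\,\|u_+\|_{L^{p^{*}_s}(\Omega)}^q \le C' [u_+]_{s,p}^{q}$. Here I should note that $[u_+]_{s,p} \le [u]_{s,p}$ is a standard fact for the Gagliardo seminorm (the positive part is a $1$-Lipschitz truncation), so these Sobolev estimates apply with $u_+$ treated as an element of $W^{s,p}_0(\Omega)$.

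Combining these, the defining identity yields
\begin{equation*}
[u_+]_{s,p}^p \le C_1 [u_+]_{s,p}^{p^{*}_s} + \lambda c_4 C_2 [u_+]_{s,p}^{q},
\end{equation*}
with $C_1, C_2$ depending only on $n,p,s,q,\Omega$. Since $u \in K_1 \subset M_1$ also requires $\int_\Omega u_+ > 0$, we have $[u_+]_{s,p} > 0$ (otherwise $u_+$ would be constant, hence zero by the boundary condition), so we may divide by $[u_+]_{s,p}^p$ to get
\begin{equation*}
1 \le C_1 [u_+]_{s,p}^{p^{*}_s - p} + \lambda c_4 C_2 [u_+]_{s,p}^{q - p}.
\end{equation*}
Because $p^{*}_s - p > 0$ and $q - p > 0$, the right-hand side tends to $0$ as $[u_+]_{s,p} \to 0$; hence there is a constant $D = D(n,p,s,q,\Omega,\lambda,c_4) > 0$ such that $[u_+]_{s,p}^p \ge D$ for all $u \in K_1$. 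The argument for $K_2$ is identical with $u_-$ in place of $u_+$ (using that $u \le 0$ on $K_2$, so $f(x,u)u_- = -f(x,u)u = |f(x,u)u|$ and (H2) applies to the absolute value, or more directly one notes $f(x,u)u_- \le c_4\|u_-\|_{L^q}^q$ by the sign hypotheses). For $K_3 = M_1 \cap M_2$, both defining identities hold simultaneously, so both bounds $[u_+]_{s,p}^p \ge D$ and $[u_-]_{s,p}^p \ge D$ follow at once. Taking the minimum of the three constants gives a single $D$.

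The main technical point to be careful about is the treatment of $f(x,u)u_\pm$ on $M_1$ and $M_2$: on $M_1$ one only knows $\int_\Omega u_+ > 0$, not that $u \ge 0$ everywhere, so on all of $M_1$ the quantity $f(x,u)u_+$ involves $f$ evaluated at a function that may be negative where $u_+ = 0$; but since $u_+ \ge 0$ and $u_+ = 0$ exactly where $u \le 0$, we in fact have $f(x,u)u_+ = f(x,u_+)u_+$ pointwise using $f(x,0)=0$ from (H1) only on the set $\{u \le 0\}$ — wait, more carefully, on $\{u > 0\}$ we have $u = u_+$ so $f(x,u)u_+ = f(x,u_+)u_+$, and on $\{u \le 0\}$ we have $u_+ = 0$ so the product vanishes; hence $\int_\Omega f(x,u)u_+\,dx = \int_\Omega f(x,u_+)u_+\,dx$, to which (H2) applies directly with the test function $u_+ \in L^q(\Omega)$. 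This identification is the step that makes the estimate legitimate; everything else is the routine Sobolev/boundedness chain above, which is genuinely elementary. I do not anticipate a serious obstacle — the only thing to watch is bookkeeping of which constants depend on $\lambda$, since $D$ as produced above does depend on $\lambda$ (through the coefficient $\lambda c_4 C_2$), and one should check that this dependence is harmless for the later parts of the proof (indeed, as $\lambda \to \infty$ one expects $D$ to stay bounded below since the $C_1$ term alone already forces $[u_+]_{s,p}^{p^{*}_s-p} \ge 1/(2C_1)$ once the other term is $\le 1/2$).
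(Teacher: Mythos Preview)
Your argument is correct and follows essentially the same route as the paper: use the defining identity $[u_\pm]_{s,p}^p = \|u_\pm\|_{p^*_s}^{p^*_s} + \lambda\int_\Omega f(x,u)u_\pm\,dx$, bound the right-hand side via (H2) and the Sobolev embedding by $C_1[u_\pm]_{s,p}^{p^*_s} + C_2[u_\pm]_{s,p}^{q}$, and conclude from $p^*_s, q > p$. Your extra care in identifying $\int_\Omega f(x,u)u_+\,dx = \int_\Omega f(x,u_+)u_+\,dx$ (via $f(x,0)=0$) and in tracking the $\lambda$-dependence of $D$ is more explicit than the paper, which simply asserts the (H2) bound on $f(x,u)u_\pm$ and absorbs $\lambda$ into the constant.
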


\begin{proof}

By definition of $K_i$ we have
\begin{equation*}
 [u_\pm]^p_{s,p}= \|u_\pm \|_{p^{*}_s}^{p^{*}_s} + \int_\Omega \la
f(x,u)u_\pm \,dx .
\end{equation*}
Using (H2) we have
\begin{equation*}
\int_\Omega \la f(x,u)u_\pm \,dx \leq c_4\|u_\pm \|_q^q \text{, for
} p^{*}_s\geq q >p.
\end{equation*}
Then
\begin{equation*}
 [u_\pm]^p_{s,p}\leq \|u_\pm\|_{p^{*}_s}^{p^{*}_s} + c_4\|u_\pm\|_q^q
\leq \tilde{C} \left(  [u_\pm]^{p^{*}_s}_{s} + [u_\pm]_s^q \right) .
\end{equation*}
In the second inequality we use Poincar\'e inequality.
In summary $[u_+]^p_{s,p}\leq \hat{C}[u_\pm]_{s,p}^r.$  Where $r=q$ if
$[u_\pm]_{s,p}<1$ or $r=p^*_s$ if $[u_\pm]_{s,p}\geq 1$. Since $r>p$ we
have what we need.
\end{proof}

The following lemma describes the properties of the manifolds $M_i$.

\begin{lema}\label{variedad}
$M_i$ is a sub-manifold of $W^{s,p}_0(\Omega)$ of codimension 1, if
$i=1,2$ and 2 if $i=3$ respectively, the sets $K_i$ are complete, and for every $ u \in M_i$ we have $T_u W^{s,p}_0(\Omega)=T_u M_i\oplus\mbox{span}\{u_+,u_-\}$ where $T_uM$ is the tangent space at u of the Banach manifold M. Finally, the projection to the first
coordinate is uniformly continuous on $M_i$.
\end{lema}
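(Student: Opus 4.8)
The plan is to realize each $M_i$ as a regular level set of a $C^1$ map and invoke the implicit function theorem on Banach manifolds. Define $\varphi_1,\varphi_2:W^{s,p}_0(\Omega)\to\R$ by
\[
\varphi_1(u)=[u_+]_{s,p}^p-\int_\Omega|u_+|^{p^*_s}\,dx-\lambda\int_\Omega f(x,u)u_+\,dx,
\qquad
\varphi_2(u)=[u_-]_{s,p}^p-\int_\Omega|u_-|^{p^*_s}\,dx-\lambda\int_\Omega f(x,u)u_-\,dx,
\]
so that $M_1=\varphi_1^{-1}(0)\cap\{\int_\Omega u_+>0\}$, $M_2=\varphi_2^{-1}(0)\cap\{\int_\Omega u_->0\}$, and $M_3=(\varphi_1,\varphi_2)^{-1}(0,0)$ intersected with $\{\int_\Omega u_+>0\}\cap\{\int_\Omega u_->0\}$. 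First I would record that $u\mapsto u_\pm$ is a continuous (indeed $C^1$ off the set where the relevant part vanishes) operation on $W^{s,p}_0(\Omega)$, so that, using (H1) and the growth bounds coming from (H2), each $\varphi_i$ is $C^1$ on the relevant open set with derivative computed by the usual chain rule; the open conditions $\int_\Omega u_\pm>0$ guarantee we stay away from the bad set, which is exactly why $M_i$ is defined with these restrictions and why the $K_i$ are the natural objects.

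The heart of the matter is showing $0$ is a regular value, i.e. that $D\varphi_1(u)$, $D\varphi_2(u)$ are surjective onto $\R$ (resp. $(D\varphi_1(u),D\varphi_2(u))$ onto $\R^2$ on $M_3$). For this I would test $D\varphi_1(u)$ against $u_+$ itself: a direct computation gives
\[
\langle D\varphi_1(u),u_+\rangle = p[u_+]_{s,p}^p - p^*_s\int_\Omega|u_+|^{p^*_s}\,dx - \lambda\int_\Omega\big(f(x,u)u_+ + f_u(x,u)u_+^2\big)\,dx,
\]
and on $M_1$ one can substitute $[u_+]_{s,p}^p=\int_\Omega|u_+|^{p^*_s}+\lambda f(x,u)u_+\,dx$ to rewrite this as
\[
(p-p^*_s)\int_\Omega|u_+|^{p^*_s}\,dx + \lambda(p-1)\int_\Omega f(x,u)u_+\,dx - \lambda\int_\Omega f_u(x,u)u_+^2\,dx.
\]
Using (H2) — specifically $\int f(x,u)u_+\,dx\le c_1\int f_u(x,u)u_+^2\,dx$ with $c_1<\tfrac{1}{p^*_s-1}$, hence $(p-1)\int f u_+ - \int f_u u_+^2 \le \big((p-1)c_1-1\big)\int f_u u_+^2 < 0$ after also noting $p-1<p^*_s-1$ — together with Lemma \ref{cota} (which forces $[u_+]_{s,p}^p\ge D>0$, so all three integrals are bounded below away from $0$), this quantity is strictly negative, bounded away from $0$ on each $K_i$. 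The cross term $\langle D\varphi_1(u),u_-\rangle$ vanishes because $u_+$ and $u_-$ have disjoint supports and $f(x,0)=0$, so the $2\times2$ Jacobian on $M_3$ is diagonal with nonzero entries; this gives surjectivity and also the splitting $T_uW^{s,p}_0(\Omega)=T_uM_i\oplus\mathrm{span}\{u_+,u_-\}$ (for $i=1,2$ only the relevant generator matters, for $i=3$ both). Completeness of $K_i$ then follows: $K_i$ is the intersection of the closed set $\varphi_i^{-1}(0)$ (or $(\varphi_1,\varphi_2)^{-1}(0,0)$) with the closed sign-condition sets $\{u\ge0\}$, $\{u\le0\}$; the open conditions $\int u_\pm>0$ are automatically satisfied on $K_i$ thanks to the uniform lower bound $D$ from Lemma \ref{cota}, so no sequence in $K_i$ can escape to the boundary, and $K_i$ is closed in the complete space $W^{s,p}_0(\Omega)$.

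Finally, "the projection to the first coordinate is uniformly continuous on $M_i$" I read as: the map $M_i\ni u\mapsto u_+\in W^{s,p}_0(\Omega)$ (and likewise $u\mapsto u_-$) is uniformly continuous — this is immediate from the elementary pointwise inequality $|a_+-b_+|\le|a-b|$ applied inside the Gagliardo seminorm, giving $[u_+-v_+]_{s,p}\le[u-v]_{s,p}$, hence $1$-Lipschitz, no manifold structure needed. The main obstacle is the regular-value computation: one must be careful that the chain rule for $u\mapsto[u_+]_{s,p}^p$ produces the correct nonlocal derivative (pairing against the kernel $\tfrac{|u(x)-u(y)|^{p-2}(u(x)-u(y))}{|x-y|^{n+ps}}$ integrated against $u_+(x)-u_+(y)$), and that the sign bookkeeping with (H2) genuinely yields strict negativity uniformly on $K_i$ rather than merely non-vanishing at each point — it is there that the constants $c_1<\tfrac1{p^*_s-1}$ and $c_2>p$ are used in an essential way, and that Lemma \ref{cota} is invoked to make the estimate uniform.
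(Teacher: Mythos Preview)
Your approach matches the paper's: realize $M_i$ as $\varphi_i^{-1}(0)$ on the open set $\{\int_\Omega u_\pm>0\}$, show $0$ is a regular value by testing $D\varphi_1(u)$ against $u_+$ and invoking (H2), note the cross-derivatives $\langle D\varphi_1(u),u_-\rangle$ vanish by disjoint supports, and read off the tangent-space splitting. Your algebraic route to strict negativity (substitute the constraint first, then use $(p-p^*_s)<0$ and $(p-1)c_1<1$) is a harmless variant of the paper's (which bounds $p[u_+]^p_{s,p}\le p^*_s[u_+]^p_{s,p}$ before substituting, arriving at $(p^*_s-1)\int f u_+-\int f_u u_+^2<0$).

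There is, however, a genuine gap in the last item. The phrase ``projection to the first coordinate'' does \emph{not} mean $u\mapsto u_+$; it means the linear projection of $T_uW^{s,p}_0(\Omega)$ onto the first summand $T_uM_i$ in the splitting just established --- this is precisely what is used in Lemma \ref{condicion} to pass from $\nabla\Phi|_{K_i}(u_j)\to 0$ to $\nabla\Phi(u_j)\to 0$. Its uniform boundedness follows from the explicit formula $v\mapsto v-\dfrac{\langle D\varphi_1(u),v\rangle}{\langle D\varphi_1(u),u_+\rangle}\,u_+$ together with the uniform bound $\langle D\varphi_1(u),u_+\rangle\le -c<0$ that you already extracted via Lemma \ref{cota}; the paper proceeds exactly this way. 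Worse, your proposed argument for $u\mapsto u_+$ is itself incorrect: the pointwise inequality $|a_+-b_+|\le|a-b|$ does \emph{not} imply $[u_+-v_+]_{s,p}\le[u-v]_{s,p}$, because that would require $|(u_+(x)-v_+(x))-(u_+(y)-v_+(y))|\le |(u(x)-v(x))-(u(y)-v(y))|$ pointwise in $(x,y)$, which fails (take $u(x)=1$, $v(x)=0$, $u(y)=-1$, $v(y)=-2$: the left side is $1$, the right side is $0$). You have the right ingredients --- the uniform strict negativity of $\langle D\varphi_i(u),u_\pm\rangle$ on $K_i$ --- but you must feed them into a bound on the tangential projection, not into a Lipschitz estimate for $u\mapsto u_+$.
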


\begin{proof}We consider
\begin{equation*}
\overline{M_1}=\left\{u\in W^{s,p}_0(\Omega) : \int_{\Omega} u_+>0
\right\},
\end{equation*}
\begin{equation*}
\overline{M_2}=\left\{u\in W^{s,p}_0(\Omega) : \int_{\Omega}
u_->0\right\},
\end{equation*}
\begin{equation*}
\overline{M_3}=\overline{M_1} \cap \overline{M_2}.
\end{equation*}

Observe that $M_i\subset \overline{M_i}$ and since the sets
$\overline{M_i}$ are open so it's sufficient to prove that $M_i$ is
a regular sub-manifold of $W^{s,p}_0(\Omega)$.

We are going to build a function $C^{1}$,
$\varphi:\overline{M_i}\fd\R^d$ with $d=1$ if $i=1,2$ or $d=2$ if
$i=3$, such that $M_i$ is the inverse of a regular value of $ \varphi_i $.

We define
\begin{equation*}
\varphi_1(u)=  [u_+]_{s,p}^p - \int_{\Omega} |u_+|^{p^{*}_s} + \lambda
f(x,u)u_+ \,dx\,\,\, \text{ for } u\in M_1,
\end{equation*}
\begin{equation*}
\varphi_2(u)=  [u_-]_{s,p}^p - \int_{\Omega} |u_-|^{p^{*}_s} +\lambda
f(x,u)u_- \,dx\,\,\, \text{ for } u\in M_2,
\end{equation*}
and
\begin{equation*}
\varphi_3(u)=(\varphi_1(u),\varphi_2(u))  \,\,\, \text{ for } u\in
M_3.
\end{equation*}

We have that  $M_i=\varphi^{-1}_i(0)$ so we have to prove that 0 is
a regular value of $\varphi_i$.

Let us calculate $\langle\nabla \varphi_1(u),u_+\rangle$ for $u\in
M_1$,
\begin{equation*}
\frac{d}{d\ve}\varphi_1(u+\ve u_+)= \frac{d}{d\ve}\left([(u+\ve
u_+)_+]_{s,p}^p - \int_{\Omega}  |(u+\ve u_+)_+|^{p^{*}_s} +\lambda
f(x,u +\ve u_+)(u+\ve u_+)_+ \,dx\right).
\end{equation*}
Since $(u+\ve u_+)_+=u_++\ve u_+$ we have that
$\frac{d}{d\ve}\varphi_1(u+\ve u_+)$ is equal to
\begin{equation*}
  (1+\ve)^{p-1} p[u_+]_{s,p}^p - \int_{\Omega}  p^{*}_s  (1+\ve)^{p_s^{*}-1} |u_+|^{p^{*}_s} +\lambda f(x,u +\ve u_+)u_+ +\lambda f_u(x,u +\ve u_+)(1+\ve)u_+^2\, dx,
\end{equation*}
then since $u\in M_1$,
\begin{align*}
    \frac{d}{d\ve}\varphi_1(u+\ve u_+)  \Big|_{\ve=0}=& \left(
 p\, [u_+]_{s,p}^p - \int_{\Omega} p^{*}_s |u_+|^{p^{*}_s} +\lambda
f(x,u)u_+ +\lambda f_u(x,u)u_+^2 \,dx\right)\\
\leq & \,p^{*}_s\left([u_+]_{s,p}^p- \int_{\Omega} |u_+|^{p^{*}_s}\, dx\right)
-\int_{\Omega} \lambda f(x,u)u_+ +\lambda f_u(x,u)u_+^2 \,dx\\
=&\, p^{*}_s\left(\int_{\Omega} \lambda f(x,u)u_+ dx \right)
-\int_{\Omega}  \lambda f(x,u)u_+ +\lambda f_u(x,u)u_+^2 \,dx\\
=& \, ( p^{*}_s -1 )\left(\int_{\Omega} \lambda f(x,u)u_+\, dx \right)
-\int_{\Omega}\lambda f_u(x,u)u_+^2 \,dx.
\end{align*}

By $(H_2)$ we know that there exists $c_1 \in \left( 0,
\frac{1}{p^{*}_s -1} \right) $ such that
\begin{equation}\label{shakira}
\int_{\Omega} \lambda f(x,u)u_+ \,dx \leq c_1 \int_{\Omega}\lambda
f_u(x,u)u_+^2 \,dx.
\end{equation}

Then
\begin{equation*}
    (p^{*}_s -1) \int_{\Om} \la f(x,u) u_+ \,dx - \int_{\Om} \la f_u (x,u)u^2_+ \,dx <0.
\end{equation*}

In summary, we have that $\langle\nabla \varphi_1(u),u_+\rangle <0$,
then $\nabla\varphi_1(u)\neq 0$. This means that $M_1$ is a regular
submanifold of $W^{s,p}_0(\Omega )$.

The proof for $M_2$, is analogous.

Let's observe that if we prove that $\langle\nabla
\varphi_2(u),u_+\rangle =\langle\nabla \varphi_1(u),u_-\rangle=0$
for $u\in M_3$ then for what we had made before, we know that
$\langle\nabla \varphi_1(u),u\rangle<0$ and $\langle\nabla
\varphi_2(u),u\rangle<0$. For this we can affirm that
$\nabla\varphi_3(u)\neq 0$ for $u\in M_3$.

Then we will prove that $\langle\nabla\varphi_1(u),u_-\rangle=0$. In
fact,
\begin{align*}
    \frac{d}{d\ve}\varphi_1(u+\ve u_-)=& \frac{d}{d\ve}\left([(u+\ve u_-)_+]_{s,p}^p - \int_{\Omega} |(u+\ve u_-)_+|^{p^{*}_s} +\lambda f(x,u +\ve u_-)(u+\ve u_-)_+\, dx\right)\\
    =& \frac{d}{d\ve}\left( [u_+]_{s,p}^p -\int_{\Omega}   |u_+|^{p^{*}_s} +\lambda f(x,u +\ve u_-)u_+ \,dx\right)\\
    =& -\int_{\Omega} \la f_u (x,u +\ve u_-)u_+ u_-\, dx =0.
\end{align*}
Then
\begin{equation*}
     \left. \frac{d}{d\ve}\varphi_1(u+\ve u_-)\right|_{\ve=0} =0.
\end{equation*}
In an analogous way we have $\langle\nabla \varphi_2(u),u_+\rangle
=0$. Therefore, $M_3$ is a regular submanifold.

The completeness of $K_i$ is easy and is left to the reader.

Finally, it remains to see that
\begin{equation*}
    T_u W^{s,p}_0(\Omega)=T_u M_1 \oplus\mbox{span}\{u_+\},
\end{equation*}
where $M_1=\{ u: \varphi_1(u)=0 \}$ and $T_u M_1 =\{ v:
\langle\nabla \varphi_1(u),v\rangle =0 \}$. Now let $v\in T_u
W^{s,p}_0(\Omega)$ be a unit tangential vector, then $v=v_1+v_2$
where $v_2=\alpha u_+$ and $v_1=v-v_2$. Let us take $\alpha$ as
\begin{equation*}
    \alpha=\frac{\langle\nabla \varphi_1(u),v\rangle}{\langle\nabla \varphi_1(u),u_+\rangle}.
\end{equation*}
With this choice, we have that $v_1\in T_u M_1$. Now
\begin{equation*}
     \langle\nabla \varphi_1(u),v_1\rangle =0 .
\end{equation*}
The very same argument is used to show that $T_u
W^{s,p}_0(\Omega)=T_u M_2 \oplus\mbox{span}\{u_-\}$ and $T_u
W^{s,p}_0(\Omega)=T_u M_i \oplus\mbox{span}\{u_+,u_-\}$.

From these formulas and the estimates given in the first part of the
proof, the uniform continuity of the projections onto $T_u M_i$
follows.
\end{proof}

Now, we say that $\{u_j\}\subset W^{s,p}_0(\Om)$ is a Palais-Smale
sequence of $c$ level if
\begin{itemize}
 \item[(i)] $\Phi(u_j)\to c$,
 \item[(ii)] $\nabla\Phi(u_j)\to 0$ in $W^{-s,p}(\Om)$.
\end{itemize}
We say that $\Phi$ satisfies Palais-Smale condition of level $c$ if
for every  $\{u_j\}$ Palais-Smale sequence of level $c$ there exists
a subsequence that converges strongly in $W^{s,p}_0(\Om)$.

Now, in order to use Ekeland's variational principle, we need to
check the Palais-Smale condition for the functional $\Phi$
restricted to the manifold $M_i$. To this end, we need the following
lemma which proves the Palais-Smale condition for the unrestricted
functional below certain energy level.

\begin{lema}\label{constante_S}
The unrestricted functional $\Phi$ verifies the Palais-Smale
condition for energy level c for every
$c<\frac{s}{n}S^{\frac{n}{sp}}$, where $S$ is the best Sobolev
constant for the fractional Laplacian $S:=\inf_{\phi \in C^{\ito}_c
(\Omega)} \frac{ [\phi]^p_{s,p} }{\|\phi\|^p_{p^{*}_s}}.$

\end{lema}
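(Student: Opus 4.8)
The plan is to follow the classical Brezis--Nirenberg scheme adapted to the nonlocal setting, using the Concentration Compactness Principle of Mosconi et al.\ \cite{MS} to control the loss of compactness at the critical exponent. Let $\{u_j\}$ be a Palais--Smale sequence at level $c<\frac{s}{n}S^{n/sp}$. First I would show that $\{u_j\}$ is bounded in $W^{s,p}_0(\Omega)$. This follows from the standard manipulation $\Phi(u_j)-\frac{1}{\theta}\langle\nabla\Phi(u_j),u_j\rangle$ for a suitable $\theta\in(p,p^*_s)$, say $\theta=c_2$: using (H2), namely $c_2\int_\Omega F(x,u)\,dx\le\int_\Omega f(x,u)u\,dx$, the terms involving $f$ have a sign, the critical term contributes a positive multiple of $\|u_j\|_{p^*_s}^{p^*_s}$ (since $\frac1p-\frac1{p^*_s}>0$ and $\frac1p-\frac1{c_2}>0$ need to be sorted out carefully—here one uses that $\frac1\theta\langle\nabla\Phi(u_j),u_j\rangle$ is $o(\|u_j\|)$), and one is left with $c+o(1)+o(\|u_j\|_{s,p})\ge \delta[u_j]_{s,p}^p$, giving the bound.

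Once boundedness is established, up to a subsequence $u_j\rightharpoonup u$ weakly in $W^{s,p}_0(\Omega)$, $u_j\to u$ strongly in $L^q(\Omega)$ for every $q<p^*_s$, and $u_j\to u$ a.e. By the Concentration Compactness Principle \cite{MS}, there exist an at most countable index set $I$, points $\{x_i\}_{i\in I}\subset\overline\Omega$, and weights $\mu_i,\nu_i>0$ such that, in the sense of measures,
\begin{equation*}
|u_j(x)-u_j(y)|^p|x-y|^{-(n+ps)}\,\text{(integrated in $y$)}\rightharpoonup d\mu\ge |D^s u|^p+\sum_{i\in I}\mu_i\delta_{x_i},\qquad |u_j|^{p^*_s}\rightharpoonup d\nu=|u|^{p^*_s}+\sum_{i\in I}\nu_i\delta_{x_i},
\end{equation*}
with the crucial reverse-Sobolev relation $\mu_i\ge S\,\nu_i^{p/p^*_s}$. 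The heart of the argument is to prove $I=\emptyset$. Fix $i\in I$ and test $\nabla\Phi(u_j)$ against $\phi_{\varepsilon,i}u_j$, where $\phi_{\varepsilon,i}$ is a cutoff localizing near $x_i$; since $\langle\nabla\Phi(u_j),\phi_{\varepsilon,i}u_j\rangle\to0$, the nonlocal term produces $\mu_i$, the critical term produces $\nu_i$, and the subcritical term $\lambda f(x,u_j)u_j$ contributes nothing in the limit $\varepsilon\to0$ because $f(x,u_j)u_j$ is bounded in $L^1$ with no atoms (it converges strongly). This yields $\mu_i\le\nu_i$, which combined with $\mu_i\ge S\nu_i^{p/p^*_s}$ forces either $\nu_i=0$ or $\nu_i\ge S^{n/sp}$.

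To exclude the second alternative, suppose some atom survives with $\nu_i\ge S^{n/sp}$. Evaluating the energy along the sequence and using $\Phi(u_j)-\frac{1}{p^*_s}\langle\nabla\Phi(u_j),u_j\rangle\to c$ together with (H2) to discard the sign-definite $f$-contribution, one obtains
\begin{equation*}
c=\lim_j\left[\left(\tfrac1p-\tfrac1{p^*_s}\right)[u_j]_{s,p}^p+\lambda\int_\Omega\left(\tfrac{1}{p^*_s}f(x,u)u-F(x,u)\right)dx\right]\ge \left(\tfrac1p-\tfrac1{p^*_s}\right)\mu_i\ge\frac{s}{n}S\,\nu_i^{p/p^*_s}\ge\frac{s}{n}S^{n/sp},
\end{equation*}
(the $f$-term being nonnegative by $c_2<p^*_s$ and (H2)), contradicting the hypothesis $c<\frac{s}{n}S^{n/sp}$. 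Hence $I=\emptyset$, so $\int_\Omega|u_j|^{p^*_s}\to\int_\Omega|u|^{p^*_s}$, i.e.\ $u_j\to u$ strongly in $L^{p^*_s}(\Omega)$. Finally, testing $\nabla\Phi(u_j)-\nabla\Phi(u)$ against $u_j-u$ and using the now-established strong convergence of the critical and subcritical lower-order terms, the monotonicity of the $p$-fractional operator gives $[u_j-u]_{s,p}\to0$, completing the proof. The main obstacle is the localized testing argument at the concentration points: one must verify that the cross terms in the nonlocal Gagliardo quotient involving the cutoff $\phi_{\varepsilon,i}$ vanish as $\varepsilon\to0$, which is exactly the nonlocal commutator estimate supplied by \cite{MS}, and keeping careful track that the $\lambda f$ term genuinely carries no mass at $x_i$.
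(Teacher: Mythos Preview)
Your sketch is exactly the standard argument the paper has in mind: the authors omit the proof of this lemma entirely, referring to the Concentration Compactness Principle of Mosconi et al.\ and to \cite{GAP,Silva,FBSSRN} for the details, and your outline (boundedness of the PS sequence, application of CCP, localized testing to obtain $\mu_i\le\nu_i$, energy lower bound to exclude atoms, then strong convergence via monotonicity) is precisely that route.

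There is one small slip worth fixing. In the step where you exclude $\nu_i\ge S^{n/sp}$, you compute
\[
\Phi(u_j)-\tfrac{1}{p^*_s}\langle\nabla\Phi(u_j),u_j\rangle
=\Big(\tfrac1p-\tfrac1{p^*_s}\Big)[u_j]_{s,p}^p+\lambda\int_\Omega\Big(\tfrac{1}{p^*_s}f(x,u_j)u_j-F(x,u_j)\Big)\,dx
\]
and claim the $f$-term is nonnegative ``by $c_2<p^*_s$ and (H2)''. But (H2) gives $\int F\le\frac{1}{c_2}\int fu$, and since $c_2<p^*_s$ means $\frac{1}{c_2}>\frac{1}{p^*_s}$, this does \emph{not} yield $\int F\le\frac{1}{p^*_s}\int fu$. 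The fix is immediate: subtract $\frac{1}{p}\langle\nabla\Phi(u_j),u_j\rangle$ (or $\frac{1}{c_2}$) instead. With $\theta=p$ one gets
\[
c+o(1)=\Big(\tfrac1p-\tfrac1{p^*_s}\Big)\|u_j\|_{p^*_s}^{p^*_s}+\lambda\int_\Omega\Big(\tfrac{1}{p}f(x,u_j)u_j-F(x,u_j)\Big)\,dx,
\]
and now $\frac{1}{p}\int fu-\int F\ge\big(\frac{c_2}{p}-1\big)\int F\ge0$ since $c_2>p$; passing to the limit using $\int|u_j|^{p^*_s}\to\int d\nu\ge\nu_i\ge S^{n/sp}$ gives the desired contradiction $c\ge\frac{s}{n}S^{n/sp}$. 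With this correction your argument is complete and matches the paper's intended proof.
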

The proof of Lemma \ref{constante_S} is omitted as it uses standard
ideas and is based in the Concentration Compactness Principle for
nonlocal operators (see\cite{MS}). For the local case it can be
found in \cite{GAP,Silva}. For the non local case it follows
similarly, see \cite{FBSSRN} for the details.

Now, we can prove the Palais-Smale condition for the restricted
functional.

\begin{lema}\label{condicion}
The functional $\Phi|_{K_i}$ satisfies the Palais-Smale condition
for energy level $c$ for every $c<\frac{s}{n} S^{\frac{n}{sp}}$.
\end{lema}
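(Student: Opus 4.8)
The strategy is to show that a Palais--Smale sequence for $\Phi|_{K_i}$ is, after discarding a vanishing error, a Palais--Smale sequence for the \emph{unrestricted} functional $\Phi$, and then to invoke Lemma~\ref{constante_S}. So let $\{u_j\}\subset K_i$ satisfy $\Phi(u_j)\to c$ and $\nabla(\Phi|_{M_i})(u_j)\to0$. First, the bound $\alpha_1[u_j]_{s,p}^p\le\alpha_3\Phi(u_j)$ of Lemma~\ref{cotasunif} shows that $\{u_j\}$ is bounded in $W^{s,p}_0(\Omega)$.

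Next, since by Lemma~\ref{variedad} we have $T_{u_j}W^{s,p}_0(\Omega)=T_{u_j}M_i\oplus\operatorname{span}\{(u_j)_+,(u_j)_-\}$, with the normal space generated by $\nabla\varphi_1(u_j)$ (and $\nabla\varphi_2(u_j)$ when $i=3$), there are Lagrange multipliers $\mu_j^1$ (and $\mu_j^2$ for $i=3$) with
\[
\nabla\Phi(u_j)=\mu_j^1\nabla\varphi_1(u_j)+\mu_j^2\nabla\varphi_2(u_j)+o(1)\quad\text{in }W^{-s,p}(\Omega),
\]
the missing multiplier being $0$ for $i=1,2$. Pairing this identity with $(u_j)_+$ and $(u_j)_-$ and using the orthogonality relations $\langle\nabla\varphi_2(u_j),(u_j)_+\rangle=\langle\nabla\varphi_1(u_j),(u_j)_-\rangle=0$ from Lemma~\ref{variedad}, together with the fact that for $u_j\in M_i$ the left-hand terms $\langle\nabla\Phi(u_j),(u_j)_\pm\rangle$ are governed by the manifold equations (e.g.\ for $i=1$, where $u_j=(u_j)_+$, one has simply $\langle\nabla\Phi(u_j),(u_j)_+\rangle=\varphi_1(u_j)=0$, and analogously in the cases $i=2,3$ with the corresponding components), the convergence $\mu_j^1,\mu_j^2\to0$ reduces to a uniform lower bound $|\langle\nabla\varphi_1(u_j),(u_j)_+\rangle|,|\langle\nabla\varphi_2(u_j),(u_j)_-\rangle|\ge\delta>0$, the other factors being bounded.

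For this lower bound I would reuse the computation already carried out in the proof of Lemma~\ref{variedad} which, combined with (H2) (through \eqref{shakira}), gives $\langle\nabla\varphi_1(u_j),(u_j)_+\rangle\le\bigl((p^{*}_s-1)c_1-1\bigr)\la\int_\Omega f_u(x,u_j)(u_j)_+^2\,dx$ with $(p^{*}_s-1)c_1-1<0$; so everything comes down to keeping $\int_\Omega f_u(x,u_j)(u_j)_+^2\,dx$, equivalently $\|(u_j)_+\|_q$ by (H2), bounded away from $0$ (and likewise for $(u_j)_-$). Here the subcritical level is decisive: if $\|(u_j)_+\|_q\to0$ along a subsequence, then both $\la\int f(x,u_j)(u_j)_+\,dx$ and $\la\int F(x,u_j)\,dx$ tend to $0$, so the manifold identity $[(u_j)_+]_{s,p}^p=\|(u_j)_+\|_{p^{*}_s}^{p^{*}_s}+\la\int f(x,u_j)(u_j)_+\,dx$, the Sobolev inequality and the bound $[(u_j)_+]_{s,p}^p\ge D$ of Lemma~\ref{cota} force $[(u_j)_+]_{s,p}^p\ge S^{n/(sp)}-o(1)$, whence $\Phi(u_j)\to\bigl(\tfrac1p-\tfrac1{p^{*}_s}\bigr)[(u_j)_+]_{s,p}^p\ge\tfrac{s}{n}S^{n/(sp)}$, contradicting $c<\tfrac{s}{n}S^{n/(sp)}$. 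Hence $\mu_j^1,\mu_j^2\to0$, so $\nabla\Phi(u_j)\to0$ in $W^{-s,p}(\Omega)$ and $\{u_j\}$ is an unrestricted Palais--Smale sequence at level $c<\tfrac{s}{n}S^{n/(sp)}$; Lemma~\ref{constante_S} then yields a subsequence converging strongly to some $u\in W^{s,p}_0(\Omega)$. Finally $u\in K_i$: the sign constraint passes to the strong limit, continuity of $\varphi_i$ gives $\varphi_i(u)=0$, and $\|(u_j)_+\|_q\not\to0$ (resp.\ $\|(u_j)_-\|_q$) forces $\int_\Omega u_+>0$ (resp.\ $\int_\Omega u_->0$), so $u\in M_i$ and $u\in K_i$.

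I expect the crux to be precisely the vanishing of the Lagrange multipliers, i.e.\ the uniform lower bound on $|\langle\nabla\varphi_i(u_j),(u_j)_\pm\rangle|$: this is what turns ``constrained critical'' into ``critical'', and it depends jointly on the structural hypothesis (H2) (with $c_1<\tfrac1{p^{*}_s-1}$) and on staying strictly below the threshold $\tfrac{s}{n}S^{n/(sp)}$; Lemmas~\ref{cotasunif}, \ref{cota} and \ref{variedad} provide the remaining ingredients.
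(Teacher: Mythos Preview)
Your proof is correct and follows the same overall strategy as the paper---reduce a Palais--Smale sequence for $\Phi|_{K_i}$ to one for the unrestricted functional $\Phi$, then invoke Lemma~\ref{constante_S}---but the mechanics differ. The paper argues directly: it chooses a unit vector $v_j$ realizing $\|\nabla\Phi(u_j)\|$, splits $v_j=w_j+z_j$ via Lemma~\ref{variedad} with $z_j\in\operatorname{span}\{(u_j)_\pm\}$, observes that $\langle\nabla\Phi(u_j),z_j\rangle=0$ because $u_j\in M_i$, and concludes $\|\nabla\Phi(u_j)\|=\langle\nabla\Phi|_{M_i}(u_j),w_j\rangle\to0$ since $w_j$ stays bounded. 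Your Lagrange--multiplier route is equivalent but more explicit: to force $\mu_j^1,\mu_j^2\to0$ you need a uniform lower bound on $|\langle\nabla\varphi_i(u_j),(u_j)_\pm\rangle|$, and you supply it by a contradiction argument that uses the energy threshold $c<\tfrac{s}{n}S^{n/(sp)}$ together with Lemma~\ref{cota} and the Sobolev inequality. The paper hides exactly this lower bound inside the clause ``the projection to the first coordinate is uniformly continuous'' in Lemma~\ref{variedad} (which is what makes $w_j$ bounded), so your version is more self--contained on this point, at the price of length. You also check that the strong limit lies in $K_i$, which the paper omits. Finally, both arguments rely on $\langle\nabla\Phi(u_j),(u_j)_\pm\rangle=0$ for $u_j\in M_i$; this is immediate for $i=1,2$, while for $i=3$ the nonlocal cross terms in the Gagliardo seminorm make the identity delicate---a subtlety shared by the paper and your proposal.
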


\begin{proof}
Let $\{ u_k \}\subset K_i$ be a Palais-Smale sequence, that is
$\Phi(u_k)$ is uniformly bounded and $\nabla \Phi |_{K_i} \fd 0$
strongly. We need to show that there exists a subsequence $u_{k_j}$
that converges strongly in $K_i$.

Let $v_j\in T_{u_j} W^{s,p}_0(\Omega)$ be a unit tangential vector
such that
\begin{equation*}
    \langle\nabla \Phi (u_j) ,v_j\rangle =\|\nabla\Phi
    (u_j)\|_{W^{-s,p}_0 (\Omega)}.
\end{equation*}
Now, by lemma \ref{variedad}, $v_j=w_j+z_j$ with $w_j\in T_{u_j}M_i
$ and $z_j\in span\{(u_j)_+,(u_j)_-\}$.

Since $\Phi(u_j)$ is uniformly bounded, by Lemma \ref{cotasunif},
$u_j$ is uniformly bounded in $W^{s,p}_0(\Omega)$ and hence $w_j$ is
uniformly bounded in $W^{s,p}_0(\Omega)$. Therefore
\begin{equation*}
    \|\nabla\Phi (u_j)\|_{W^{-s,p}_0(\Omega)}=\langle\nabla \Phi (u_j) ,v_j\rangle =\langle\nabla \Phi|_{K_i} (u_j) ,v_j\rangle \fd
    0.
\end{equation*}

As $v_j$ is uniformly bounded and $\nabla \Phi|_{K_i} (u_j)\fd 0$
strongly, the inequality converges strongly to $0$. Now the result
follows by Lemma \ref{constante_S}.

\end{proof}

We now immediately obtain the following lemma.
\begin{lema}
There exists $u\in K_i$ be a critical point of the restricted
functional $\Phi|_{K_i}$. Moreover $u$ is also a critical point of
the unrestricted functional $\Phi$ and hence a weak solution to
\eqref{P}.
\end{lema}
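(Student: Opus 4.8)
The plan is to obtain, for each $i=1,2,3$, a minimizer of $\Phi$ restricted to $K_i$ at a level strictly below the compactness threshold $\tfrac{s}{n}S^{n/sp}$, and then upgrade it to an unrestricted critical point. First I would set $c_i := \inf_{K_i}\Phi$. By Lemma \ref{cotasunif} we have $\Phi(u)\geq \tfrac{\alpha_2}{\alpha_3}\big(\int_\Omega |u|^{p^*_s}+\lambda f(x,u)u\,dx\big)\geq 0$ on $K_i$, so $c_i\geq 0$ is finite. Moreover, using the test functions $t_\lambda w_0$ (and $\overline t_\lambda w_0+\underline t_\lambda w_1$ for $i=3$) produced by Lemma \ref{t}, together with $t_\lambda\to 0$ as $\lambda\to\infty$ and the bound $\Phi(u)\leq \alpha_4[u]_{s,p}^p/\alpha_3$, one gets $c_i\leq \Phi(t_\lambda w_0)\to 0$; hence for $\lambda$ large enough $c_i<\tfrac{s}{n}S^{n/sp}$. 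This is exactly where the threshold $\lambda^*$ (depending on $n,p,q$ and $c_3$) enters: choosing $w_0,w_1$ once and for all, the estimate $\Phi(t_\lambda w_0)\leq A t_\lambda^p/p$ with $t_\lambda\leq (A/(c_3\lambda E))^{1/(q-p)}$ from Lemma \ref{t} makes the dependence explicit.

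Next I would run Ekeland's variational principle on the complete metric space $K_i$ (completeness is Lemma \ref{variedad}): it yields a minimizing sequence $\{u_k\}\subset K_i$ with $\Phi(u_k)\to c_i$ and $\|\nabla\Phi|_{K_i}(u_k)\|\to 0$, i.e. a Palais--Smale sequence for the restricted functional at level $c_i$. Since $c_i<\tfrac{s}{n}S^{n/sp}$, Lemma \ref{condicion} gives a subsequence converging strongly in $K_i$ to some $u\in K_i$ with $\Phi(u)=c_i$ and $\nabla\Phi|_{K_i}(u)=0$. To see that $u$ is a critical point of the \emph{unrestricted} $\Phi$, I would use the splitting $T_uW^{s,p}_0(\Omega)=T_uM_i\oplus\operatorname{span}\{u_+,u_-\}$ from Lemma \ref{variedad}: $\nabla\Phi|_{K_i}(u)=0$ kills the $T_uM_i$ component, so it remains to show $\langle\nabla\Phi(u),u_+\rangle=\langle\nabla\Phi(u),u_-\rangle=0$. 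But for $u\in M_1$ one computes $\langle\nabla\Phi(u),u_+\rangle=[u_+]_{s,p}^p-\int_\Omega|u_+|^{p^*_s}+\lambda f(x,u)u_+\,dx=\varphi_1(u)=0$ by the very definition of $M_1$; similarly $\langle\nabla\Phi(u),u_-\rangle=\varphi_2(u)=0$ on $M_2$, and on $M_3$ both vanish. Hence $\nabla\Phi(u)=0$ in $W^{-s,p}(\Omega)$, so $u$ is a weak solution of \eqref{P}.

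Finally, nontriviality and the sign information come from Lemma \ref{cota}: on $K_1$ we have $[u_+]_{s,p}^p\geq D>0$ and $u\geq 0$, so $u$ is a nonnegative nontrivial solution, and by regularity/strong maximum principle arguments it is positive; symmetrically $K_2$ gives a negative solution. For $i=3$, Lemma \ref{cota} gives both $[u_+]_{s,p}^p\geq D$ and $[u_-]_{s,p}^p\geq D$, so the solution obtained in $K_3=M_3$ has both a nontrivial positive and a nontrivial negative part, hence changes sign; in particular it is distinct from the other two. I expect the only genuinely delicate point to be verifying that the minimax level lies below $\tfrac{s}{n}S^{n/sp}$ with the stated dependence of $\lambda^*$ — everything else is bookkeeping with the lemmas already proved — but this is precisely handled by the decay $t_\lambda\to 0$ in Lemma \ref{t}, which forces $c_i\to 0$ as $\lambda\to\infty$.
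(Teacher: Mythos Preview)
Your proof is correct and follows essentially the same approach as the paper, which does not prove this lemma separately but absorbs its content into the proof of Theorem~\ref{main} via Ekeland's principle on the complete set $K_i$ together with Lemmas~\ref{t}, \ref{cotasunif}, \ref{variedad}, and \ref{condicion}. You make explicit the one step the paper leaves tacit---namely that $\langle\nabla\Phi(u),u_\pm\rangle$ coincides with $\varphi_{1,2}(u)=0$ on $M_i$, which combined with the splitting $T_uW^{s,p}_0(\Omega)=T_uM_i\oplus\mathrm{span}\{u_+,u_-\}$ upgrades the restricted critical point to an unrestricted one.
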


With all this preparatives, this is the proof of our main result.

\begin{proof}[Proof of Theorem \ref{main}]
To prove the Theorem \ref{main}, we need to check that the functional $\Phi|_{K_i}$ verifies the hypotheses of the Ekeland's Variational Principle.

The fact that $\Phi$ is bounded below over $K_i$ is a direct
consequence of the construction of the manifold $K_i$.

Then by Ekeland's Variational Principle, there exists $v_k\in K_i$,
such that
$$\Phi (v_k)\fd c_i \text{    and   } (\nabla\Phi|_{K_i})(v_k)\fd 0. $$

We have to check that if we choose $\lambda$ large, we have that
$c_i<\frac{s}{n} S^{\frac{n}{sp}}$. This follows easily from Lemma
\ref{t}. For instance, for $c_1$ we have that choosing $w_0\geq 0$,
$$
c_1 \leq \Phi(t_\lambda w_0) \leq \frac{1}{p}
t_{\lambda}^p[w_0]_{s,p}^p.
$$

 Moreover, it follows from the
estimate of $t_\lambda$ in Lemma \ref{t} , that $c_1 \to 0$ as
$\lambda \to 0$. Then $c_i<\frac{s}{n} S^{\frac{n}{sp}}$ for
$\lambda>\lambda^*(p,q,n,c_3)$. The other cases are analogous.

From Lemma \ref{constante_S}, it follows that $v_k$ has a convergent
subsequence, that we still call $v_k$. Therefore $\Phi$ has a
critical point in $K_i$, $i=1,2,3$ and, by construction, one of them
is positive, other is negative and the last one changes sign.
\end{proof}

\section*{Acknowledgements}
We want to thank Juli\'an Fern\'andez Bonder  for his valuable help.

This paper was supported by grants UBACyT 20020130100283BA, CONICET
PIP 11220150100032CO and PROICO 031906, UNSL.

 N.Cantizano is a fellow of CONICET and A.Silva is member of CONICET.

\bibliographystyle{plain}
\bibliography{biblio}

\end{document}